\documentclass[11pt]{article}
\usepackage[a4paper,margin=1in]{geometry}
\usepackage[T1]{fontenc}
\usepackage[utf8]{inputenc}
\usepackage{lmodern}
\usepackage{amsmath,amssymb,amsthm,mathtools}
\usepackage{enumitem}
\usepackage{hyperref}
\usepackage[nameinlink,capitalise,noabbrev]{cleveref}
\usepackage{microtype}
\usepackage{xcolor}
\usepackage{booktabs}
\usepackage{array}
\usepackage{longtable}
\usepackage{setspace}
\usepackage{mathrsfs}

\hypersetup{
  colorlinks=true,
  linkcolor=blue!50!black,
  citecolor=blue!50!black,
  urlcolor=blue!50!black,
  pdftitle={Simplex Layers and Phase Boundaries in the Partition Graph},
  pdfauthor={Fedor B. Lyudogovskiy}
}

\newtheorem{theorem}{Theorem}[section]
\newtheorem{proposition}[theorem]{Proposition}
\newtheorem{lemma}[theorem]{Lemma}
\newtheorem{corollary}[theorem]{Corollary}
\theoremstyle{definition}

\newtheorem{problem}[theorem]{Problem}
\theoremstyle{remark}
\newtheorem{remark}[theorem]{Remark}

\newcommand{\Par}{\mathrm{Par}}
\newcommand{\Cl}{\mathrm{Cl}}
\newcommand{\dimloc}{\mathrm{dim}_{\mathrm{loc}}}
\newcommand{\SpecDelta}{\mathrm{Spec}_{\Delta}}
\newcommand{\Deltaloc}{\Delta_{\mathrm{loc}}}
\newcommand{\deltaloc}{\delta_{\mathrm{loc}}}
\newcommand{\omegaloc}{\omega_{\mathrm{loc}}}
\newcommand{\first}{\mathrm{first}}
\newcommand{\Fset}{\mathcal{F}}
\newcommand{\Ebdry}{\partial^{E}}
\newcommand{\Vbdry}{\partial^{V}}
\newcommand{\bminus}{\partial^{-}}
\newcommand{\bplus}{\partial^{+}}
\newcommand{\toplayer}{L_{\mathrm{top}}}
\newcommand{\Stair}{\mathcal{S}}

\title{Simplex Layers and Phase Boundaries in the Partition Graph}
\author{Fedor B. Lyudogovskiy}
\date{}

\begin{document}
\maketitle

\begin{abstract}
For the partition graph $G_n$ on the set of partitions of $n$, we study the stratification induced by the local simplex dimension
\[
\dimloc(\lambda),\qquad \lambda\vdash n,
\]
defined as the maximal dimension of a simplex of the clique complex $K_n=\Cl(G_n)$ containing $\lambda$. Using the previously established description of maximal cliques through a vertex in terms of star and top capacities, we define the simplex layers
\[
L_r(n):=\{\lambda\vdash n:\dimloc(\lambda)=r\}
\]
and investigate their global structure.

We formalize the layer stratification of $G_n$, rewrite layer membership in terms of the local star- and top-capacities, and record its basic structural consequences, including conjugation invariance. We then study first occurrence of layers across $n$, introducing the first-occurrence indices $n_r^{\first}$ and the corresponding first-occurrence sets $\Fset_r$. For the initial layer values, this yields explicit theorem-level results; more generally, it leads to a first-occurrence table and several natural sequence questions.

We also define a canonical graph-theoretic notion of phase boundary between adjacent layers, namely the adjacent-layer edge boundary
\[
\Ebdry_{r,r+1}(n),
\]
consisting of edges joining $L_r(n)$ to $L_{r+1}(n)$. This gives an interface language for the layer stratification, together with one-sided and vertex-boundary variants. On the computational side, we record the initial first-occurrence table, note a finite-range computational pattern for several subsequent layer values, and indicate the broader datasets naturally attached to the stratification, including layer-size, top-layer, boundary, and restricted-count data.

The paper treats the simplex layers as exact level sets of a local invariant, distinct from the broader shell-type geometric language used elsewhere in the project. Thus the layer stratification and its phase boundaries provide a formal skeleton for the simplicial geometry of the partition graph.
\end{abstract}

\textbf{Keywords.} integer partitions; partition graph; clique complex; local simplex dimension; simplex layers; phase boundaries; graph stratification; partition conjugation.

\textbf{MSC 2020.} 05A17, 05C25, 05C38, 05E10.

\section{Introduction}

The partition graph $G_n$ is the graph on $\Par(n)$ whose edges correspond to elementary transfers of one unit between parts, followed by reordering. Several aspects of this graph have been studied earlier in this series, including local clique structure \cite{ProjectLocal}, axial and directional morphology \cite{ProjectAxial,ProjectDirectional}, boundary geometry \cite{ProjectBoundary}, shell geometry \cite{ProjectShells}, and the topology of the clique complex. In the present paper we focus on one bridge between the local and global theories. Namely, we study the stratification of $G_n$ by local simplex dimension.

Let
\[
K_n:=\Cl(G_n)
\]
be the clique complex of $G_n$. For a vertex $\lambda\in \Par(n)$, its local simplex dimension
\[
\dimloc(\lambda)
\]
is the maximal dimension of a simplex of $K_n$ containing $\lambda$. Earlier work in the project established that this invariant is governed by two canonical families of simplices, namely star-simplices and top-simplices, and that the corresponding local capacities determine $\dimloc$. This local theory is the main input for the present paper.

This leads to the exact simplex layers
\[
L_r(n):=\{\lambda\vdash n:\dimloc(\lambda)=r\},
\]
which are the main objects of the paper. These layers are level sets of a local invariant and therefore define a canonical stratification of $\Par(n)$. Our goal is to study this stratification systematically: to describe its basic formal properties, to investigate the first occurrence of layers as $n$ grows, and to define the exact interfaces between adjacent layers.

The first motivation for this study is structural. The local simplex dimension is one of the most natural local simplicial invariants of the partition graph, and the passage from the local formula for $\dimloc$ to the global layer structure of $G_n$ is mathematically natural in its own right. It yields level sets, top layers, first-occurrence questions, and phase boundaries between neighboring layer values.

The second motivation is geometric. Elsewhere in the project, broader shell-type language has been used to describe the global morphology of $G_n$, including triangular skin, tetrahedral core, higher-dimensional thickening, and rear-central thickening; see, for example, \cite{ProjectBoundary,ProjectShells}. The present paper does not replace that geometric language; rather, it sharpens one part of it. The simplex layers are not broad geometric zones; they are exact level sets of a local invariant. They therefore provide a more rigid formal skeleton underlying part of the previously observed global morphology.

The distinction between these two languages is important. In this paper, statements about
\[
L_r(n),\qquad \SpecDelta(n),\qquad \Deltaloc(n),\qquad \Ebdry_{r,r+1}(n)
\]
are exact layer-theoretic statements. By contrast, statements about skins, cores, framework regions, or rear-central zones belong to a coarser geometric description. The two descriptions are related, but they are not identical.

The main contributions of the paper are the following.

First, we formalize the layer stratification induced by $\dimloc$. Using the local star/top dimension formula, we rewrite layer membership in terms of local capacities and record the basic consequences: the layers partition $\Par(n)$, the realized spectrum is defined, top layers are nonempty, and all layers are invariant under partition conjugation.

Second, we organize the first-occurrence problem for layers. For each $r$, we consider the smallest $n$ for which the layer $L_r(n)$ is nonempty and the set of partitions that realize layer $r$ at that minimal value of $n$. For the initial layer values this yields explicit theorem-level results; more generally, it leads to a first-occurrence table and a family of sequence questions.

Third, we define a formal notion of phase boundary between adjacent layers. The primary notion adopted here is the adjacent-layer edge boundary
\[
\Ebdry_{r,r+1}(n),
\]
consisting of those edges of $G_n$ whose endpoints lie in the two neighboring exact layers. This gives a graph-theoretic interface between $L_r(n)$ and $L_{r+1}(n)$, together with associated one-sided and vertex-boundary notions.

Fourth, we record the initial first-occurrence table explicitly, note a finite-range computational pattern for several subsequent layer values, and describe the broader computational framework attached to the stratification. This includes layer-size data, top-layer data, adjacent-layer boundary counts, and restricted counts inside selected geometric regions. Such computations do not replace exact proofs, but they indicate the natural datasets associated with the layer formalism and help separate theorem-level facts from broader empirical patterns.

The present paper is devoted to the stratification induced by the local simplex dimension. Its subject is not the full shell geometry of the partition graph, and not the topology of the clique complex as such, but the level-set structure produced by $\dimloc$. It therefore occupies an intermediate position in the broader cycle: it converts the local simplicial theory into a global vertex stratification and introduces the corresponding boundary language.

The paper is organized as follows. In Section~\ref{sec:preliminaries} we recall the partition graph, the clique complex, and the imported local dimension formula based on star/top capacities. In Section~\ref{sec:stratification} we define the simplex layers and establish their basic formal properties. Section~\ref{sec:firstocc} studies the first occurrence of layers. Section~\ref{sec:boundaries} introduces the adjacent-layer phase boundaries. Section~\ref{sec:localization} develops a restriction-based localization framework and compares the exact layer language with the coarser shell geometry. Section~\ref{sec:atlas} briefly describes the natural computational datasets and associated sequences. Section~\ref{sec:conclusion} concludes with a summary and a list of open problems.

\section{Preliminaries and the local dimension formula}\label{sec:preliminaries}

\subsection{The partition graph and the clique complex}

Let $n\ge 1$, and let $G_n$ be the partition graph on $\Par(n)$, whose vertices are the partitions of $n$, with adjacency given by an elementary transfer of one unit between two parts, followed by reordering.

We write
\[
K_n:=\Cl(G_n)
\]
for the clique complex of $G_n$.

\subsection{Local simplex dimension}

For a vertex $\lambda\in \Par(n)$, its local simplex dimension is defined by
\[
\dimloc(\lambda):=\max\{\dim \sigma:\sigma \text{ is a simplex of }K_n\text{ and }\lambda\in \sigma\}.
\]
Equivalently, if $\omegaloc(\lambda)$ denotes the maximum size of a clique in $G_n$ containing $\lambda$, then
\[
\dimloc(\lambda)=\omegaloc(\lambda)-1.
\]

The local simplicial structure through a fixed vertex is governed by two canonical families of simplices: star-simplices and top-simplices. These were developed earlier in the project and will be used here only through the associated local capacities.

\subsection{Star- and top-capacities}

Let $\lambda\vdash n$. A transfer $\lambda(c\to a)$ is called admissible if moving one cell from the removable corner $c$ to the addable corner $a$, followed by reordering, again yields a partition of $n$. For a removable corner $c$ of $\lambda$, let
\[
A_{\max}(\lambda,c)
\]
be the set of addable corners $a$ such that the transfer $\lambda(c\to a)$ is admissible. For an addable corner $a$ of $\lambda$, let
\[
C_{\max}(\lambda,a)
\]
be the set of removable corners $c$ such that the same transfer is admissible.

Define the star-capacity and top-capacity of $\lambda$ by
\[
s(\lambda):=\max_c |A_{\max}(\lambda,c)|,
\qquad
t(\lambda):=\max_a |C_{\max}(\lambda,a)|.
\]

Thus $s(\lambda)$ measures the largest star-simplex through $\lambda$, while $t(\lambda)$ measures the largest top-simplex through $\lambda$.

\subsection{Imported local dimension formula}

We record the local dimension formula established in \cite[Proposition~5.4]{ProjectLocal}.

\begin{theorem}[Imported local dimension formula]\label{thm:imported_local_dimension}
For every partition $\lambda\vdash n$,
\[
\dimloc(\lambda)=\max\{s(\lambda),t(\lambda)\}.
\]
\end{theorem}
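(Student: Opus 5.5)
The plan is to prove the two inequalities $\dimloc(\lambda)\ge\max\{s(\lambda),t(\lambda)\}$ and $\dimloc(\lambda)\le\max\{s(\lambda),t(\lambda)\}$ separately. Both rest on describing $G_n$ through Young diagrams. Identify a partition with its diagram, a finite set of cells. I will first record, as a preliminary observation, that $\lambda\mu$ is an edge of $G_n$ exactly when $|\lambda\triangle\mu|=2$, i.e.\ $\mu=(\lambda\setminus\{c\})\cup\{a\}$ for a removable cell $c$ and a cell $a\notin\lambda$. The justification is that a one-unit transfer between parts followed by reordering changes the multiset of row lengths by decreasing one length and increasing another by one. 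Read on the sorted diagram, this removes the last cell of the last row of the decreased length and adds a cell at the end of the first row of the increased length.

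With this in hand, every neighbour of $\lambda$ has the form $\mu=\lambda(c\to a)=(\lambda\setminus\{c\})\cup\{a\}$ with $a\in A_{\max}(\lambda,c)$, and distinct admissible pairs $(c,a)$ give distinct $\mu$. The key lemma concerns two neighbours $\mu_1=\lambda(c_1\to a_1)$ and $\mu_2=\lambda(c_2\to a_2)$. It states that they are adjacent iff $c_1=c_2$ or $a_1=a_2$. To see this, note that $c_i\in\lambda$ and $a_j\notin\lambda$, so $a_j\ne c_i$ for all $i,j$. Hence
\[
\mu_1\triangle\mu_2=\{c_1\}\triangle\{c_2\}\cup\{a_1\}\triangle\{a_2\},
\]
which has size $4$ unless $c_1=c_2$ or $a_1=a_2$, in which case it has size $2$. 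By the preliminary observation this is exactly the adjacency criterion.

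\emph{Lower bound.} Fix a removable corner $c$ with $|A_{\max}(\lambda,c)|=s(\lambda)$. The set $\{\lambda\}\cup\{\lambda(c\to a):a\in A_{\max}(\lambda,c)\}$ is a clique by the lemma, since all its members share the removed corner $c$. It has $s(\lambda)+1$ vertices, hence it is a simplex of dimension $s(\lambda)$ containing $\lambda$. The same argument with a fixed addable corner gives a top-simplex of dimension $t(\lambda)$.

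\emph{Upper bound.} Take any clique $\{\lambda,\mu_1,\dots,\mu_k\}$ and encode each $\mu_i$ by its pair $(c_i,a_i)$, viewed as an edge of the bipartite graph between removable and addable corners. By the lemma these $k$ edges pairwise share an endpoint. A pairwise intersecting family of edges in a bipartite graph is a star: if $(c,a_1)$ and $(c,a_2)$ are both present, a third edge meeting both must contain $c$, because $a_1\ne a_2$ and bipartiteness forbids a triangle. So all $c_i$ coincide, or all $a_i$ coincide, giving $k\le s(\lambda)$ or $k\le t(\lambda)$.

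I expect the main obstacle to be the preliminary identification of the transfer-with-reordering adjacency with single-cell moves on diagrams, together with checking that ``admissible'' in the definitions of $A_{\max}$ and $C_{\max}$ matches exactly this cell-move description. That includes the case where $a$ becomes addable only after $c$ is removed, so $a$ need not be an addable corner of $\lambda$ itself. I would handle it by taking the cell-move description as the operative definition of admissibility. The set-theoretic computations in the lemma are then insensitive to this subtlety, since they use only $c_i\in\lambda$ and $a_i\notin\lambda$.
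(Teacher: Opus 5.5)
Your proof is correct and follows essentially the route the paper relies on. The paper does not reprove this formula; it imports it from \cite[Proposition~5.4]{ProjectLocal}, which rests on describing cliques through $\lambda$ as star-simplices (common removed corner $c$) or top-simplices (common added corner $a$), and your symmetric-difference lemma plus the pairwise-intersecting-edges argument is a self-contained derivation of exactly that description.

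The obstacle you flag does not actually arise. If $a\notin\lambda$ and $(\lambda\setminus\{c\})\cup\{a\}$ is a partition, the cells above and to the left of $a$ lie in $\lambda\setminus\{c\}\subseteq\lambda$, so $a$ is automatically an addable corner of $\lambda$; the only addable corners excluded are the cells immediately right of and below $c$. This strict cell-move reading of admissibility is also the one consistent with Table~\ref{tab:smallverify}, e.g.\ $s((1))=0$.
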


In the present paper we adopt this normalization throughout.

As an immediate consequence, for every $\lambda\vdash n$ and every $r\ge 0$,
\[
\dimloc(\lambda)\ge r
\iff
s(\lambda)\ge r \text{ or } t(\lambda)\ge r,
\]
\[
\dimloc(\lambda)\le r
\iff
s(\lambda)\le r \text{ and } t(\lambda)\le r,
\]
and
\[
\dimloc(\lambda)=r
\iff
\max\{s(\lambda),t(\lambda)\}=r.
\]

From this point on, the earlier local theory is used only through the quantities $s(\lambda)$, $t(\lambda)$, and Theorem~\ref{thm:imported_local_dimension}.

\subsection{Basic notation for layers and first occurrences}

For $r\ge 0$, define the $r$-th simplex layer by
\[
L_r(n):=\{\lambda\vdash n:\dimloc(\lambda)=r\}.
\]

We also define the realized layer spectrum
\[
\SpecDelta(n):=\{r\ge 0:L_r(n)\neq\varnothing\},
\]
the top local simplex dimension
\[
\Deltaloc(n):=\max_{\lambda\vdash n}\dimloc(\lambda),
\]
and the corresponding top layer
\[
\toplayer(n):=L_{\Deltaloc(n)}(n).
\]

For later use, we define the first-occurrence index
\[
n_r^{\first}:=\min\{n\ge 1:L_r(n)\neq\varnothing\},
\]
whenever this minimum exists, and the corresponding first-occurrence set
\[
\Fset_r:=L_r(n_r^{\first}).
\]

Finally, for adjacent layers we will use the adjacent-layer edge boundary
\[
\Ebdry_{r,r+1}(n)
:=
\bigl\{
\{\lambda,\mu\}\in E(G_n):
\lambda\in L_r(n),\ \mu\in L_{r+1}(n)
\bigr\}.
\]

\section{Simplex layers and exact stratification}\label{sec:stratification}

\subsection{Exact layer decomposition}

Recall that
\[
L_r(n):=\{\lambda\vdash n:\dimloc(\lambda)=r\}.
\]

These are the level sets of the integer-valued function
\[
\dimloc:\Par(n)\to \mathbb Z_{\ge 0}.
\]

\begin{proposition}\label{prop:partition}
For every $n\ge 1$, the family of nonempty layers
\[
\{L_r(n):r\in \SpecDelta(n)\}
\]
forms a partition of $\Par(n)$. More precisely:
\begin{enumerate}[label=(\arabic*)]
\item $L_r(n)\cap L_s(n)=\varnothing$ whenever $r\neq s$;
\item \[
   \Par(n)=\bigsqcup_{r\in \SpecDelta(n)}L_r(n).
   \]
\end{enumerate}
\end{proposition}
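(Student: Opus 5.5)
This is a purely formal statement: the layers are the fibres of a function, so the plan is to check that $\dimloc$ is a well-defined map $\Par(n)\to\mathbb Z_{\ge 0}$ and then use the general fact that the nonempty fibres of any function partition its domain.

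\emph{Well-definedness.} The first step is to confirm that $\dimloc(\lambda)$ is a finite nonnegative integer for every $\lambda\vdash n$. The set $\Par(n)$ is finite, so $G_n$ and $K_n$ are finite. The singleton $\{\lambda\}$ is a $0$-simplex of $K_n$ containing $\lambda$, so the maximum defining $\dimloc(\lambda)$ is taken over a nonempty finite set. It therefore exists and satisfies $\dimloc(\lambda)\ge 0$. Equivalently, by Theorem~\ref{thm:imported_local_dimension}, $\dimloc(\lambda)=\max\{s(\lambda),t(\lambda)\}$ is a maximum of two nonnegative integers. Either route suffices, and the direct one avoids any dependence on the imported formula.

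\emph{Part (1).} If $\lambda\in L_r(n)\cap L_s(n)$, then $r=\dimloc(\lambda)=s$, because $\dimloc(\lambda)$ is a single value. Hence the layers are disjoint whenever $r\neq s$.

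\emph{Part (2).} Each $L_r(n)$ is by definition a subset of $\Par(n)$, which gives one inclusion. For the other, take any $\lambda\vdash n$ and set $r:=\dimloc(\lambda)$. Then $\lambda\in L_r(n)$, so $L_r(n)\neq\varnothing$, which means $r\in\SpecDelta(n)$. Thus every $\lambda$ lies in some layer indexed by $\SpecDelta(n)$. Together with part (1), this shows the union is disjoint and equals $\Par(n)$. Since each indexed layer is nonempty by the definition of $\SpecDelta(n)$, the family is a genuine set partition.

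There is no real obstacle. The only point that needs care is the existence and finiteness of the maximum, which follows from the finiteness of $K_n$ together with the trivial $0$-simplex $\{\lambda\}$.
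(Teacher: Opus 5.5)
Your proof is correct and takes the same approach as the paper, which simply notes that the layers are the exact level sets of $\dimloc$. Your additional check that $\dimloc(\lambda)$ is a well-defined nonnegative integer (finiteness of $K_n$ together with the $0$-simplex $\{\lambda\}$) makes explicit a point the paper leaves implicit.
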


\begin{proof}
Both statements are immediate from the definition of $L_r(n)$ as the exact level set of $\dimloc$. \qedhere
\end{proof}

The decomposition itself is tautological. The substantive questions concern realizability, first occurrence, and adjacency between layers.

\subsection{Layer criteria in terms of local capacities}

The local dimension formula yields direct criteria for layer membership.

\begin{proposition}\label{prop:criteria}
Let $\lambda\vdash n$, and let $r\ge 0$. Then:
\begin{enumerate}[label=(\arabic*)]
\item $\lambda\in L_r(n)$ if and only if
   \[
   \max\{s(\lambda),t(\lambda)\}=r;
   \]

\item $\dimloc(\lambda)\ge r$ if and only if
   \[
   s(\lambda)\ge r \text{ or } t(\lambda)\ge r;
   \]

\item $\dimloc(\lambda)\le r$ if and only if
   \[
   s(\lambda)\le r \text{ and } t(\lambda)\le r.
   \]
\end{enumerate}
\end{proposition}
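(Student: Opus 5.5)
The plan is to derive all three items directly from Theorem~\ref{thm:imported_local_dimension}, which gives $\dimloc(\lambda)=\max\{s(\lambda),t(\lambda)\}$ for every $\lambda\vdash n$. After this substitution, each claim becomes an elementary statement about the maximum of two integers. The only care needed is to note that $s(\lambda)$ and $t(\lambda)$ are well-defined nonnegative integers. We adopt the convention that a maximum over an empty set of corners is $0$; this is consistent with $\dimloc(\lambda)\ge 0$, since the vertex $\lambda$ alone is a $0$-simplex.

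For item (1), unfold the definition: $\lambda\in L_r(n)$ holds exactly when $\dimloc(\lambda)=r$. By Theorem~\ref{thm:imported_local_dimension}, this is equivalent to $\max\{s(\lambda),t(\lambda)\}=r$.

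For items (2) and (3), we use two order-theoretic facts valid for any integers $x,y,r$:
\[
\max\{x,y\}\ge r \iff (x\ge r \text{ or } y\ge r),
\qquad
\max\{x,y\}\le r \iff (x\le r \text{ and } y\le r).
\]
The first holds because $\max\{x,y\}$ equals one of $x,y$ and dominates both. The second holds because $\max\{x,y\}$ is the least upper bound of $\{x,y\}$. Applying these with $x=s(\lambda)$ and $y=t(\lambda)$, and substituting the imported formula, gives (2) and (3).

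No step is a genuine obstacle, since all the mathematical content lies in the imported formula. The one point to state explicitly is that the capacities are integers attaining their maxima, which holds because $\lambda$ has only finitely many corners. This makes the manipulations with $\max$ legitimate.
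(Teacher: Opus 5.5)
Your proposal is correct and takes the same approach as the paper, which simply calls all three items immediate from Theorem~\ref{thm:imported_local_dimension}; you only spell out the elementary facts about $\max\{x,y\}$. The empty-maximum convention is unnecessary, since every $\lambda\vdash n$ with $n\ge 1$ has at least one removable and one addable corner, but it does no harm.
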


\begin{proof}
Immediate from Theorem~\ref{thm:imported_local_dimension}. \qedhere
\end{proof}

\begin{corollary}\label{cor:criteria_exact}
A partition $\lambda\vdash n$ lies in $L_r(n)$ if and only if
\[
s(\lambda)\le r,\qquad t(\lambda)\le r,
\]
and at least one of the equalities
\[
s(\lambda)=r,\qquad t(\lambda)=r
\]
holds.
\end{corollary}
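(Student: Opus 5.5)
The plan is to reduce the corollary to Proposition~\ref{prop:criteria}(1) and then to an elementary fact about the maximum of two integers. By Proposition~\ref{prop:criteria}(1), which rests on Theorem~\ref{thm:imported_local_dimension}, we have $\lambda\in L_r(n)$ if and only if $\max\{s(\lambda),t(\lambda)\}=r$. So it suffices to show that, for any two integers $x,y$,
\[
\max\{x,y\}=r \iff \bigl(x\le r,\ y\le r,\ \text{and } (x=r \text{ or } y=r)\bigr),
\]
and then apply this with $x=s(\lambda)$ and $y=t(\lambda)$.

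For the forward direction, assume $\max\{x,y\}=r$. Since both $x$ and $y$ are at most their maximum, we get $x\le r$ and $y\le r$. The maximum of two numbers is always equal to one of them, so $x=r$ or $y=r$.

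For the reverse direction, assume $x\le r$, $y\le r$, and that one of them, say $x$, equals $r$. The first two conditions give $\max\{x,y\}\le r$. The equality $x=r$ gives $\max\{x,y\}\ge x=r$. Together these yield $\max\{x,y\}=r$, and the case $y=r$ is symmetric.

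The same conclusion can be reached by intersecting Proposition~\ref{prop:criteria}(3) with Proposition~\ref{prop:criteria}(2). Part (3) says $\dimloc(\lambda)\le r$ exactly when $s(\lambda)\le r$ and $t(\lambda)\le r$. Part (2) says $\dimloc(\lambda)\ge r$ exactly when $s(\lambda)\ge r$ or $t(\lambda)\ge r$. Under the upper bounds $s(\lambda),t(\lambda)\le r$, the condition ``$s(\lambda)\ge r$ or $t(\lambda)\ge r$'' is equivalent to ``$s(\lambda)=r$ or $t(\lambda)=r$''. Since $\dimloc(\lambda)$ is an integer, $\dimloc(\lambda)=r$ holds exactly when both $\dimloc(\lambda)\le r$ and $\dimloc(\lambda)\ge r$ hold, which gives the corollary. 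There is no real obstacle here; the only point needing care is that all the substantive content lives in the imported formula of Theorem~\ref{thm:imported_local_dimension}.
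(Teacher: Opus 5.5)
Your proposal is correct and follows the paper's route: the paper's proof is the one-line observation that the corollary restates $\max\{s(\lambda),t(\lambda)\}=r$ from Proposition~\ref{prop:criteria}(1), and you simply write out the elementary verification of that restatement. Your alternative via parts (2) and (3) of Proposition~\ref{prop:criteria} is also valid. The integrality remark there is harmless but not needed, since $x\le r$ and $x\ge r$ already force $x=r$ for any real $x$.
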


\begin{proof}
This is just a restatement of
\[
\max\{s(\lambda),t(\lambda)\}=r.
\]
\end{proof}

Thus each layer may be viewed as the locus where the pair $(s(\lambda),t(\lambda))$ reaches the value $r$ but does not exceed it.

\subsection{Realized spectrum, bottom layer, and top layer}

Recall that
\[
\SpecDelta(n):=\{r\ge 0:L_r(n)\neq\varnothing\},
\qquad
\Deltaloc(n):=\max_{\lambda\vdash n}\dimloc(\lambda).
\]

We also define the bottom local simplex dimension
\[
\deltaloc(n):=\min_{\lambda\vdash n}\dimloc(\lambda).
\]

\begin{proposition}\label{prop:extremal_layers}
For every $n\ge 1$,
\[
\Deltaloc(n)=\max \SpecDelta(n),
\qquad
\deltaloc(n)=\min \SpecDelta(n),
\]
and the top layer satisfies
\[
\toplayer(n)=L_{\Deltaloc(n)}(n)\neq\varnothing.
\]
\end{proposition}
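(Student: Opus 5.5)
The plan is to reduce everything to elementary facts about the image of the function $\dimloc:\Par(n)\to\mathbb Z_{\ge 0}$ on a finite, nonempty set. The first step is to record that $\Par(n)$ is finite and nonempty for every $n\ge 1$; for instance, the one-part partition $(n)$ belongs to it. Hence the maximum defining $\Deltaloc(n)$ and the minimum defining $\deltaloc(n)$ exist and are attained.

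The second step is to identify $\SpecDelta(n)$ with the image set $\dimloc(\Par(n))$. By definition, $r\in\SpecDelta(n)$ exactly when some $\lambda\vdash n$ has $\dimloc(\lambda)=r$. Equivalently, $r$ is a value of $\dimloc$ on $\Par(n)$. This image is therefore a finite nonempty subset of $\mathbb Z_{\ge 0}$, and its maximum and minimum exist.

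The third step is to compare the extrema. The maximum of a function over a finite set equals the maximum of its image, so
\[
\Deltaloc(n)=\max_{\lambda\vdash n}\dimloc(\lambda)=\max \dimloc(\Par(n))=\max\SpecDelta(n).
\]
The same argument with minima gives $\deltaloc(n)=\min\SpecDelta(n)$.

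Finally, since $\Deltaloc(n)$ is attained, we may pick a maximizer $\lambda_0\vdash n$ with $\dimloc(\lambda_0)=\Deltaloc(n)$. Then $\lambda_0\in L_{\Deltaloc(n)}(n)=\toplayer(n)$, so the top layer is nonempty. Equivalently, this follows from $\Deltaloc(n)\in\SpecDelta(n)$, which was just established.

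There is no real obstacle here; the only point requiring care is the existence of the extrema. That existence rests on the finiteness and nonemptiness of $\Par(n)$, and on $\dimloc$ being well defined and finite. The latter holds because every vertex lies in the $0$-simplex $\{\lambda\}$, and $K_n$ is a finite complex. Alternatively, Theorem~\ref{thm:imported_local_dimension} expresses $\dimloc$ as a maximum of two finite capacities.
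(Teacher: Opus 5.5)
Your proof is correct and follows essentially the same route as the paper. Both arguments note that the extrema of $\dimloc$ over $\Par(n)$ are attained, hence lie in $\SpecDelta(n)$, and that every realized value lies between them; the identities and nonemptiness of $\toplayer(n)$ follow, and your remarks on finiteness and nonemptiness of $\Par(n)$ just make explicit what the paper leaves implicit in ``by definition''.
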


\begin{proof}
By definition,
\[
\Deltaloc(n)=\max_{\lambda\vdash n}\dimloc(\lambda),
\qquad
\deltaloc(n)=\min_{\lambda\vdash n}\dimloc(\lambda).
\]
Hence both values are attained by some partitions of $n$, so both belong to $\SpecDelta(n)$. Every realized layer value is attained as $\dimloc(\lambda)$ for some $\lambda\vdash n$, and is therefore bounded between $\deltaloc(n)$ and $\Deltaloc(n)$. This proves the two identities, and nonemptiness of $\toplayer(n)$ is immediate. \qedhere
\end{proof}

\begin{proposition}\label{prop:spectrum_interval}
For every $n\ge 1$,
\[
\SpecDelta(n)\subseteq [\deltaloc(n),\Deltaloc(n)]\cap \mathbb Z.
\]
Equality holds precisely when every integer between $\deltaloc(n)$ and $\Deltaloc(n)$ is realized.
\end{proposition}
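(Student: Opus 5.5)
The inclusion follows from Proposition~\ref{prop:extremal_layers}, so I will argue directly from the definitions of $\SpecDelta(n)$, $\deltaloc(n)$ and $\Deltaloc(n)$. The equality clause is a tautology about sets of integers, and I will make that explicit as well.

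For the inclusion, take any $r\in\SpecDelta(n)$. By definition $L_r(n)\neq\varnothing$, so there is some $\lambda\vdash n$ with $\dimloc(\lambda)=r$. The invariant $\dimloc$ is integer-valued, and by Theorem~\ref{thm:imported_local_dimension} it equals $\max\{s(\lambda),t(\lambda)\}$, which is a nonnegative integer. Therefore $r\in\mathbb Z$. Since $\Par(n)$ is finite, the minimum and maximum defining $\deltaloc(n)$ and $\Deltaloc(n)$ exist. Hence $\deltaloc(n)\le \dimloc(\lambda)\le \Deltaloc(n)$, which places $r$ in $[\deltaloc(n),\Deltaloc(n)]\cap\mathbb Z$.

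For the equality statement, write $I:=[\deltaloc(n),\Deltaloc(n)]\cap\mathbb Z$. Since $\SpecDelta(n)\subseteq I$ by the first part, we have $\SpecDelta(n)=I$ if and only if $I\subseteq\SpecDelta(n)$. That condition says every integer $r$ with $\deltaloc(n)\le r\le\Deltaloc(n)$ satisfies $L_r(n)\neq\varnothing$, i.e.\ $r$ is realized as $\dimloc(\lambda)$ for some $\lambda\vdash n$. This is exactly the stated criterion.

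No step is a genuine obstacle; the only point needing care is finiteness of $\Par(n)$, which guarantees that the extremal values exist. The substantive question of \emph{when} the spectrum has no gaps is not addressed here and would need the capacity analysis of the later sections.
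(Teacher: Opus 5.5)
Your proof is correct and follows the same route as the paper. The inclusion is just the observation from Proposition~\ref{prop:extremal_layers} that every realized value lies between the attained extremes $\deltaloc(n)$ and $\Deltaloc(n)$, and the equality clause is the set-theoretic tautology you spell out (integrality does not even need Theorem~\ref{thm:imported_local_dimension}, since $\dimloc$ is by definition a simplex dimension).
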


\begin{proof}
The inclusion is immediate from Proposition~\ref{prop:extremal_layers}. Equality is equivalent to saying that every integer between the minimum and maximum realized layer values is itself realized.\qedhere
\end{proof}

The two basic numerical layer statistics are therefore the maximal realized local simplex dimension $\Deltaloc(n)$ and the size of the top layer $|\toplayer(n)|$.

\subsection{Conjugation invariance of layers}

Let $\lambda\mapsto \lambda'$ denote partition conjugation. Since conjugation is an automorphism of $G_n$, it preserves clique structure and therefore preserves local simplex dimension.

\begin{proposition}\label{prop:conj_dimloc}
For every $\lambda\vdash n$,
\[
\dimloc(\lambda')=\dimloc(\lambda).
\]
\end{proposition}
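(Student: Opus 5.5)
The plan is to show that conjugation $\lambda\mapsto\lambda'$ is a graph automorphism of $G_n$, and then use the fact that automorphisms carry cliques to cliques of the same size. The paper asserts the automorphism property in the preceding paragraph; I would nevertheless verify it directly, since everything else rests on it.

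\emph{Conjugation is a bijection on $\Par(n)$.} Conjugation is an involution on Young diagrams with $n$ cells, since $(\lambda')'=\lambda$. Hence it is a bijection $\Par(n)\to\Par(n)$.

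\emph{Conjugation preserves adjacency.} I will use the description of adjacency already employed in the definition of $s$ and $t$: $\lambda\sim\mu$ in $G_n$ exactly when $\mu=\lambda(c\to a)$ for some removable corner $c$ and addable corner $a$ of $\lambda$. In other words, $\mu$ is obtained from $\lambda$ by deleting one removable cell and adding one addable cell, and the result is again a partition of $n$. Transposing the diagram across the main diagonal has three effects:
\begin{itemize}
\item it sends removable corners of $\lambda$ bijectively to removable corners of $\lambda'$;
\item it sends addable corners of $\lambda$ bijectively to addable corners of $\lambda'$;
\item it commutes with deleting and adding cells.
\end{itemize}
Therefore $(\lambda(c\to a))'=\lambda'(c'\to a')$, and admissibility is preserved. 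So $\lambda\sim\mu$ implies $\lambda'\sim\mu'$. Applying the same implication to $\lambda',\mu'$ and using that conjugation is an involution gives the converse.

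\emph{This is the main obstacle.} The paper phrases adjacency as ``transfer of one unit between parts, followed by reordering''. I must check that this is the same as the corner formulation, i.e.\ that moving a unit from one part to another changes the multiset of parts exactly by removing one corner cell and adding one addable cell. Once reordering is absorbed into the diagram picture, this holds. One subtlety is a degenerate transfer that returns $\lambda$ itself; such a move is not an edge on either side, so it causes no trouble.

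\emph{Conclusion.} Let $\sigma$ be a clique of $G_n$ containing $\lambda$. Its image $\sigma'=\{\mu':\mu\in\sigma\}$ is a clique of the same cardinality, because conjugation is injective and preserves adjacency, and $\sigma'$ contains $\lambda'$. Hence $\omegaloc(\lambda')\ge\omegaloc(\lambda)$. Applying the same argument to $\lambda'$ gives the reverse inequality. Since $\dimloc=\omegaloc-1$, we obtain $\dimloc(\lambda')=\dimloc(\lambda)$.

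Alternatively, the same transposition argument shows directly that $|A_{\max}(\lambda',c')|=|A_{\max}(\lambda,c)|$ and $|C_{\max}(\lambda',a')|=|C_{\max}(\lambda,a)|$. This gives $s(\lambda')=s(\lambda)$ and $t(\lambda')=t(\lambda)$, and the claim then follows from Theorem~\ref{thm:imported_local_dimension}. I would keep this only as a remark, since the clique argument does not need the imported formula.
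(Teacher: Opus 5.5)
Your proposal is correct and is essentially the paper's proof: conjugation is an automorphism of $G_n$, so it carries the cliques (simplices of $K_n$) through $\lambda$ bijectively onto cliques of the same size through $\lambda'$, whence $\dimloc(\lambda')=\dimloc(\lambda)$. The paper simply asserts the automorphism property, so your extra check goes beyond it, and the reordering worry can be closed cleanly by noting that every non-degenerate move yields $(\lambda\setminus\{c\})\cup\{a\}$ as a genuine Young diagram with no reordering, so $\lambda\sim\mu$ iff $\lambda\neq\mu$ and $\lambda\cap\mu$ has $n-1$ cells, which is visibly invariant under transposition.
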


\begin{proof}
Conjugation is an automorphism of $G_n$, and therefore induces an automorphism of $K_n$. It carries the simplices containing $\lambda$ bijectively onto the simplices containing $\lambda'$, so the maximal simplex dimension through $\lambda$ is preserved.\qedhere
\end{proof}

\begin{corollary}\label{cor:conj_layers}
For every $n\ge 1$ and every $r\ge 0$, the layer $L_r(n)$ is invariant under conjugation:
\[
\lambda\in L_r(n)\quad\Longleftrightarrow\quad \lambda'\in L_r(n).
\]
\end{corollary}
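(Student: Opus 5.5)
The corollary follows directly from Proposition~\ref{prop:conj_dimloc} together with the definition of $L_r(n)$ as an exact level set of $\dimloc$.

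First, for the forward implication, take $\lambda\in L_r(n)$, so $\dimloc(\lambda)=r$. Conjugation preserves size, so $\lambda'\vdash n$. Proposition~\ref{prop:conj_dimloc} gives $\dimloc(\lambda')=\dimloc(\lambda)=r$, hence $\lambda'\in L_r(n)$.

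For the reverse implication, I would use that conjugation is an involution: $(\lambda')'=\lambda$. Applying the forward implication to $\lambda'$ in place of $\lambda$ shows that $\lambda'\in L_r(n)$ implies $\lambda=(\lambda')'\in L_r(n)$. Equivalently, one can apply Proposition~\ref{prop:conj_dimloc} once more and read the equality $\dimloc(\lambda')=\dimloc(\lambda)$ in the other direction.

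There is no real obstacle, since all the content sits in Proposition~\ref{prop:conj_dimloc}, namely that conjugation is an automorphism of $G_n$, which is taken as given. Optionally, I would remark on a capacity-level cross-check via Corollary~\ref{cor:criteria_exact}. Conjugation exchanges removable and addable corners and carries admissible transfers $\lambda(c\to a)$ to admissible transfers of $\lambda'$, which suggests $s(\lambda')=t(\lambda)$ and $t(\lambda')=s(\lambda)$. Then $\max\{s,t\}$ is visibly symmetric. This cross-check is not needed for the proof.
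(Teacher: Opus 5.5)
Your argument is correct and is exactly the paper's: the corollary is immediate from Proposition~\ref{prop:conj_dimloc} together with the definition of $L_r(n)$ as a level set of $\dimloc$. One caution about the optional cross-check, which you rightly do not rely on: conjugation sends removable corners to removable corners and addable corners to addable corners rather than exchanging them, so it preserves $s$ and $t$ individually instead of swapping them (for example, $(3,1)$ and its conjugate $(2,1,1)$ both have $(s,t)=(2,1)$ in Table~\ref{tab:smallverify}), although $\max\{s,t\}$ is invariant either way.
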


\begin{proof}
Immediate from Proposition~\ref{prop:conj_dimloc} and the definition of $L_r(n)$.\qedhere
\end{proof}

\begin{corollary}\label{cor:pairing_layers}
If $L_r(n)$ contains a non-self-conjugate partition, then it contains that partition together with its conjugate. In particular, every non-self-conjugate contribution to a layer occurs in conjugate pairs.
\end{corollary}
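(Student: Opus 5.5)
The argument is a direct application of Corollary~\ref{cor:conj_layers}. There are two steps: first show that a non-self-conjugate element drags its conjugate into the same layer, and then show that the non-self-conjugate part of the layer splits into disjoint two-element orbits.

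First, let $\lambda\in L_r(n)$ with $\lambda'\neq\lambda$. Corollary~\ref{cor:conj_layers} gives $\lambda'\in L_r(n)$ immediately. So $L_r(n)$ contains the whole set $\{\lambda,\lambda'\}$, which has exactly two elements by the assumption $\lambda'\neq\lambda$. This proves the first sentence.

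For the "in particular" statement, I will use that conjugation is an involution on $\Par(n)$: $(\lambda')'=\lambda$. This means the orbits of the map $\lambda\mapsto\lambda'$ have size one (the self-conjugate partitions) or size two (the pairs $\{\lambda,\lambda'\}$ with $\lambda\neq\lambda'$). Since $L_r(n)$ is invariant under conjugation, it is a union of such orbits. Hence the set of non-self-conjugate elements of $L_r(n)$ is a disjoint union of two-element sets $\{\lambda,\lambda'\}$. Equivalently,
\[
|L_r(n)| \equiv \#\{\lambda\in L_r(n):\lambda=\lambda'\} \pmod 2 .
\]

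No step is a real obstacle. The only point to check is that two distinct orbits are disjoint. This holds because an involution partitions its domain into orbits: if $\mu\in\{\lambda,\lambda'\}$, then $\{\mu,\mu'\}=\{\lambda,\lambda'\}$.
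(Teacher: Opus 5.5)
Your proof is correct and takes the same route as the paper, which simply says the statement is immediate from Corollary~\ref{cor:conj_layers}. Your decomposition into orbits of the involution $\lambda\mapsto\lambda'$ just spells out the ``in particular'' clause, and the parity congruence you derive is a valid extra that the statement does not require.
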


\begin{proof}
Immediate from Corollary~\ref{cor:conj_layers}.\qedhere
\end{proof}

\subsection{First formal consequences}

We record two immediate consequences of the exact layer formalism.

First, whenever the layer $L_r$ is realized for some $n$, the corresponding first-occurrence index
\[
n_r^{\first}=\min\{n\ge 1:L_r(n)\neq\varnothing\}
\]
is well defined.

Second, the layer profile array
\[
a_{n,r}:=|L_r(n)|
\]
encodes the basic stratification data for fixed $n$: it determines the realized spectrum $\SpecDelta(n)$, the extremal layer values $\deltaloc(n)$ and $\Deltaloc(n)$, and the size of the top layer.

These observations will be used in two different directions. Section~\ref{sec:firstocc} studies the first appearance of layers across $n$, while later sections examine the adjacency structure between layers and the computational framework naturally associated with the resulting stratification.

\section{First occurrence of layers}\label{sec:firstocc}

\subsection{First-occurrence indices and first-occurrence sets}

We study the first appearance of simplex layers along the sequence $(G_n)_{n\ge 1}$. For a fixed layer value $r$, the basic objects are the first-occurrence index
\[
n_r^{\first}:=\min\{n\ge 1:L_r(n)\neq\varnothing\},
\]
whenever this minimum exists, and the corresponding first-occurrence set
\[
\Fset_r:=L_r(n_r^{\first}).
\]

Thus $n_r^{\first}$ records the earliest value of $n$ for which the local simplex dimension $r$ is realized, while $\Fset_r$ records the partitions that realize it for the first time.

For the present paper, the first-occurrence problem is treated at two levels. For the initial layer values, it leads to exact theorem-level results. More generally, it leads to a computational first-occurrence table and to a natural family of sequence questions.

\subsection{Criterion for first occurrence}

By Proposition~\ref{prop:criteria}, the existence of $L_r(n)$ is equivalent to the existence of a partition $\lambda\vdash n$ such that
\[
\max\{s(\lambda),t(\lambda)\}=r.
\]
Equivalently, $n_r^{\first}$ is the least positive integer $n$ for which there exists $\lambda\vdash n$ satisfying
\[
s(\lambda)\le r,\qquad t(\lambda)\le r,
\]
and at least one of the equalities
\[
s(\lambda)=r,\qquad t(\lambda)=r
\]
holds.

Thus the first-occurrence problem becomes an extremal problem for the local star- and top-capacities.

It is also convenient to distinguish two basic candidate mechanisms for first occurrence:
\begin{enumerate}[label=(\arabic*)]
\item star-first candidates, for which
   \[
   s(\lambda)=r,\qquad t(\lambda)\le r;
   \]

\item top-first candidates, for which
   \[
   t(\lambda)=r,\qquad s(\lambda)\le r.
   \]
\end{enumerate}

These two mechanisms are dual under conjugation. The paper does not assume a priori that every first occurrence is purely star-driven or purely top-driven.

\subsection{Exact first occurrences for the initial layers}

For $r\ge 1$, let
\[
\delta_r:=(r,r-1,\dots,2,1),
\qquad
|\delta_r|=\frac{r(r+1)}{2},
\]
be the staircase partition of size $\frac{r(r+1)}{2}$.
Let $\Stair_r$ denote the set of partitions obtained from $\delta_r$ by adding a single cell at an addable corner.
For convenience, also set
\[
\delta_0:=\varnothing,
\qquad
\Stair_0:=\{(1)\}.
\]
Then $|\Stair_r|=r+1$ for every $r\ge 0$, since the staircase partition $\delta_r$ has exactly $r+1$ addable corners.

The initial exact first-occurrence data are as follows.

\begin{theorem}\label{thm:firstocc_initial}
For $r=0,1,2,3$, one has
\[
n_r^{\first}=1+\frac{r(r+1)}{2},
\qquad
\Fset_r=\Stair_r.
\]
Explicitly,
\[
\Fset_0=\{(1)\},
\]
\[
\Fset_1=\{(2),(1,1)\},
\]
\[
\Fset_2=\{(3,1),(2,2),(2,1,1)\},
\]
and
\[
\Fset_3=\{(4,2,1),(3,3,1),(3,2,2),(3,2,1,1)\}.
\]
In particular,
\[
n_0^{\first}=1,\qquad
n_1^{\first}=2,\qquad
n_2^{\first}=4,\qquad
n_3^{\first}=7.
\]
\end{theorem}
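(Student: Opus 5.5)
The plan is to reduce everything to a finite computation of the pair $(s(\lambda),t(\lambda))$ for all partitions of $n\le 7$, using Proposition~\ref{prop:criteria} and Corollary~\ref{cor:criteria_exact}. For each $r\in\{0,1,2,3\}$ the claim $n_r^{\first}=1+r(r+1)/2$ splits into two parts:
\begin{enumerate}[label=(\roman*)]
\item \emph{Existence:} $L_r(1+r(r+1)/2)\neq\varnothing$.
\item \emph{Minimality:} $L_r(m)=\varnothing$ for every $m<1+r(r+1)/2$.
\end{enumerate}
The claim $\Fset_r=\Stair_r$ requires the exact list of $\lambda\vdash 1+r(r+1)/2$ with $\max\{s(\lambda),t(\lambda)\}=r$.

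Since $\Par(n)$ has $1,2,3,5,7,11,15$ elements for $n=1,\dots,7$, all of this is a bounded verification. I would organize it by $n$ rather than by $r$, computing $\dimloc$ on every vertex of $G_n$ for $n\le 7$. Corollary~\ref{cor:conj_layers} lets me treat only one partition from each conjugate pair. For instance, for $n=7$ only the partitions with $\lambda_1\ge \ell(\lambda)$ need to be examined, and self-conjugate ones such as $(4,1,1,1)$ are handled once.

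For each $\lambda$ I would list the removable corners $c$ and addable corners $a$. I would then decide admissibility of each transfer $\lambda(c\to a)$, excluding the trivial move that refills the vacated cell, and read off $s(\lambda)=\max_c|A_{\max}(\lambda,c)|$ and $t(\lambda)=\max_a|C_{\max}(\lambda,a)|$. By duality, $t(\lambda)=s(\lambda')$, so only the star computation is really needed.

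To make the minimality part less of a brute-force list, I would first try to prove a simple structural bound. If $\lambda$ has $d$ distinct part sizes, it has $d$ removable and $d+1$ addable corners. A single removable corner can send its cell to at most $d$ addable corners, namely those other than the one it recreates, and symmetrically for $t$. This should give $\dimloc(\lambda)\le d$. If I can further show that equality $s(\lambda)=d$ or $t(\lambda)=d$ forces an extra cell beyond a staircase, for example because the staircase $\delta_d$ itself loses one target at the bottom row, then $\dimloc(\lambda)=r$ forces $n\ge 1+r(r+1)/2$. For $r\le 3$ this can in any case be checked directly on the few partitions with $d=r$ and $n\le r(r+1)/2$: these are $(1)$ for $r=1$, and $(2,1)$ for $r=2$. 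For $r=3$ they are $(3,2,1)$ and the partitions of $n\le 6$ with three distinct parts.

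For existence and the identification of $\Fset_r$, I would verify directly that each $\delta_r+\square$ attains the value $r$. For example, $(3,1)$ has $s=2$ because the cell from the part $3$ can go to the second row or the new third row. I would also check that every other partition of $1+r(r+1)/2$ has $\max\{s,t\}<r$. For $r=3$ this means checking the eleven partitions of $7$ outside $\Stair_3$, which conjugation reduces to about six cases.

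The main obstacle I expect is the precise bookkeeping of admissibility, rather than any conceptual step. In particular, I must be careful about when moving a cell from corner $c$ to a distant addable corner $a$ really yields a partition, and which moves are identified after reordering. Errors here would shift $s$ by one and corrupt the whole table. I would therefore fix the convention from \cite{ProjectLocal} on one test case, such as $(2,1)$, where $G_3$ is a path so $\dimloc=1$ for every vertex, before running the enumeration.
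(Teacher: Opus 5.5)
Your backbone matches the paper's proof: compute $s$, $t$ and $\dimloc=\max\{s,t\}$ for every partition of size at most $7$, which is exactly reading off Table~\ref{tab:smallverify}. However, two of the shortcuts you propose rest on false claims and have to be discarded.

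First, conjugation does not exchange star and top. Transposition sends removable corners to removable corners and addable corners to addable corners. It carries the transfer $\lambda(c\to a)$ to $\lambda'(c'\to a')$. Hence $|A_{\max}(\lambda',c')|=|A_{\max}(\lambda,c)|$, so $s(\lambda')=s(\lambda)$, and likewise $t(\lambda')=t(\lambda)$. The table confirms this: $(3,1)$ and $(2,1,1)$ both have $(s,t)=(2,1)$, and the self-conjugate $(2,2)$ has $s=2\neq 1=t$. Setting $t(\lambda):=s(\lambda')$ would give $t((3,1))=2$, which is wrong. Your $\dimloc$ values would still come out right, but only because $t\le s$ happens to hold for every partition with $n\le 7$, and you have no argument for that. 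So compute $t$ directly from the sets $C_{\max}(\lambda,a)$.

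Second, the bound $\dimloc(\lambda)\le d$ is false. Only $t(\lambda)\le d$ is automatic, since there are just $d$ removable corners. Take a removable corner $c$ at the end of row $i$. Of the $d+1$ addable corners, $c$ can fail to reach only two:
\begin{enumerate}[label=(\roman*)]
\item the one in its own row, which exists only if $\lambda_i$ occurs once;
\item the one in row $i+1$, where the move merely refills the vacated cell, which happens only if $\lambda_{i+1}=\lambda_i-1$.
\end{enumerate}
When neither occurs, $|A_{\max}(\lambda,c)|=d+1$, so in general you only have $s(\lambda)\le d+1$. Thus $(2,2)$ has $d=1$ but $s=2$, and $(3,3,1)$ has $d=2$ but $s=3$.

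In fact $\Stair_r$ itself contains partitions with only $r-1$ distinct part sizes: $(2,2)\in\Stair_2$, and $(3,3,1),(3,2,2)\in\Stair_3$. So your bound contradicts the very statement you are proving. Reducing the minimality check to partitions with $d=r$, that is, to $\delta_r$, is therefore unjustified. The minimality conclusion is still true for $r\le 3$, but it must come from examining every partition of every $m<1+r(r+1)/2$. Since your plan already includes computing $\dimloc$ on all vertices of $G_n$ for $n\le 7$, do exactly that and drop both shortcuts; the result is precisely the paper's argument.
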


Table~\ref{tab:smallverify} records the complete finite verification underlying Theorem~\ref{thm:firstocc_initial}. It lists the values of $s(\lambda)$, $t(\lambda)$, and $\dimloc(\lambda)=\max\{s(\lambda),t(\lambda)\}$ for every partition $\lambda\vdash n$ with $1\le n\le 7$.

\begin{center}
\footnotesize
\setlength{\LTleft}{0pt}
\setlength{\LTright}{0pt}
\begin{longtable}{c l c c c}
\caption{Complete verification data for partitions of size at most $7$.}\label{tab:smallverify}\\
\toprule
$n$ & $\lambda$ & $s(\lambda)$ & $t(\lambda)$ & $\dimloc(\lambda)$ \\
\midrule
\endfirsthead
\toprule
$n$ & $\lambda$ & $s(\lambda)$ & $t(\lambda)$ & $\dimloc(\lambda)$ \\
\midrule
\endhead
\midrule
1 & $(1)$ & 0 & 0 & 0 \\
\midrule
2 & $(2)$ & 1 & 1 & 1 \\
 & $(1,1)$ & 1 & 1 & 1 \\
\midrule
3 & $(3)$ & 1 & 1 & 1 \\
 & $(2,1)$ & 1 & 1 & 1 \\
 & $(1,1,1)$ & 1 & 1 & 1 \\
\midrule
4 & $(4)$ & 1 & 1 & 1 \\
 & $(3,1)$ & 2 & 1 & 2 \\
 & $(2,2)$ & 2 & 1 & 2 \\
 & $(2,1,1)$ & 2 & 1 & 2 \\
 & $(1,1,1,1)$ & 1 & 1 & 1 \\
\midrule
5 & $(5)$ & 1 & 1 & 1 \\
 & $(4,1)$ & 2 & 1 & 2 \\
 & $(3,2)$ & 2 & 2 & 2 \\
 & $(3,1,1)$ & 2 & 2 & 2 \\
 & $(2,2,1)$ & 2 & 2 & 2 \\
 & $(2,1,1,1)$ & 2 & 1 & 2 \\
 & $(1,1,1,1,1)$ & 1 & 1 & 1 \\
\midrule
6 & $(6)$ & 1 & 1 & 1 \\
 & $(5,1)$ & 2 & 1 & 2 \\
 & $(4,2)$ & 2 & 2 & 2 \\
 & $(4,1,1)$ & 2 & 2 & 2 \\
 & $(3,3)$ & 2 & 1 & 2 \\
 & $(3,2,1)$ & 2 & 2 & 2 \\
 & $(3,1,1,1)$ & 2 & 2 & 2 \\
 & $(2,2,2)$ & 2 & 1 & 2 \\
 & $(2,2,1,1)$ & 2 & 2 & 2 \\
 & $(2,1,1,1,1)$ & 2 & 1 & 2 \\
 & $(1,1,1,1,1,1)$ & 1 & 1 & 1 \\
\midrule
7 & $(7)$ & 1 & 1 & 1 \\
 & $(6,1)$ & 2 & 1 & 2 \\
 & $(5,2)$ & 2 & 2 & 2 \\
 & $(5,1,1)$ & 2 & 2 & 2 \\
 & $(4,3)$ & 2 & 2 & 2 \\
 & $(4,2,1)$ & 3 & 2 & 3 \\
 & $(4,1,1,1)$ & 2 & 2 & 2 \\
 & $(3,3,1)$ & 3 & 2 & 3 \\
 & $(3,2,2)$ & 3 & 2 & 3 \\
 & $(3,2,1,1)$ & 3 & 2 & 3 \\
 & $(3,1,1,1,1)$ & 2 & 2 & 2 \\
 & $(2,2,2,1)$ & 2 & 2 & 2 \\
 & $(2,2,1,1,1)$ & 2 & 2 & 2 \\
 & $(2,1,1,1,1,1)$ & 2 & 1 & 2 \\
 & $(1,1,1,1,1,1,1)$ & 1 & 1 & 1 \\
\bottomrule
\end{longtable}
\end{center}

\begin{proof}
By Table~\ref{tab:smallverify}, the unique partition of $1$ has local simplex dimension $0$, every partition of $2$ or $3$ has local simplex dimension $1$, exactly the three partitions
\[
(3,1),\qquad (2,2),\qquad (2,1,1)
\]
of $4$ have local simplex dimension $2$, and exactly the four partitions
\[
(4,2,1),\qquad (3,3,1),\qquad (3,2,2),\qquad (3,2,1,1)
\]
of $7$ have local simplex dimension $3$.

The same table shows that no partition of size at most $3$ has local simplex dimension $2$, and that no partition of size at most $6$ has local simplex dimension $3$. Therefore
\[
n_0^{\first}=1,\qquad n_1^{\first}=2,\qquad n_2^{\first}=4,\qquad n_3^{\first}=7,
\]
and the corresponding first-occurrence sets are exactly the sets listed in the statement. Since these sets coincide with $\Stair_r$ for $r=0,1,2,3$, the claim follows.\qedhere
\end{proof}

\subsection{Symmetry of first-occurrence sets}

The first-occurrence sets inherit the conjugation symmetry of the layers.

\begin{proposition}\label{prop:firstocc_conj}
Whenever $n_r^{\first}$ exists, the set $\Fset_r$ is conjugation-invariant.
\end{proposition}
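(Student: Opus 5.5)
The plan is to reduce the claim directly to Corollary~\ref{cor:conj_layers}, since $\Fset_r$ is by definition a single layer at a single value of $n$. Assume $n_r^{\first}$ exists, and put $N:=n_r^{\first}$. Then
\[
\Fset_r=L_r(N).
\]
Corollary~\ref{cor:conj_layers}, applied with $n=N$, says that
\[
\lambda\in L_r(N)\iff\lambda'\in L_r(N).
\]
This is exactly the statement that $\Fset_r$ is closed under $\lambda\mapsto\lambda'$.

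One small point should be made explicit. Conjugation preserves $|\lambda|$, so $\lambda'\vdash N$ whenever $\lambda\vdash N$. It follows that the first-occurrence index itself does not change under conjugation: $L_r(n)\neq\varnothing$ if and only if its image under conjugation is nonempty. Hence $N$ is the correct value of $n$ for both $\lambda$ and $\lambda'$, and we never leave the fixed graph $G_N$.

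Because conjugation is an involution, closure under conjugation already gives $\Fset_r'=\Fset_r$ as sets. One may add, as in Corollary~\ref{cor:pairing_layers}, that non-self-conjugate members of $\Fset_r$ occur in conjugate pairs.

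There is essentially no obstacle here. The only substantive input is Proposition~\ref{prop:conj_dimloc}, which rests on conjugation being an automorphism of $G_n$, and the paper takes that as given. As a sanity check, the explicit sets in Theorem~\ref{thm:firstocc_initial} are visibly conjugation-closed:
\begin{itemize}
\item $(3,1)\leftrightarrow(2,1,1)$, with $(2,2)$ self-conjugate;
\item $(4,2,1)\leftrightarrow(3,2,1,1)$ and $(3,3,1)\leftrightarrow(3,2,2)$.
\end{itemize}
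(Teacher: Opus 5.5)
Your proposal is correct and follows the paper's proof: set $N=n_r^{\first}$, note $\Fset_r=L_r(N)$, and apply Corollary~\ref{cor:conj_layers} at $n=N$. Your extra remark that conjugation preserves $|\lambda|$ is harmless but not needed, since that corollary already works inside the fixed graph $G_N$.
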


\begin{proof}
By Corollary~\ref{cor:conj_layers}, every layer $L_r(n)$ is conjugation-invariant. Taking $n=n_r^{\first}$, we obtain the claim.\qedhere
\end{proof}

In particular, every non-self-conjugate first-occurring partition appears together with its conjugate.

\subsection{Finite-range computational observations}

Beyond the initial exact range, we record only finite-range computational observations. Exhaustive enumeration of partitions up to $n=29$ is consistent with the same staircase pattern:
\[
n_r^{\first}=1+\frac{r(r+1)}{2},
\qquad
\Fset_r=\Stair_r,
\qquad
|\Fset_r|=r+1,
\]
for
\[
0\le r\le 7.
\]
At present, however, we record this only as a finite-range computational observation outside the exact range covered by Theorem~\ref{thm:firstocc_initial}. We do not treat these rows as theorem-level results, and we do not reproduce the full computational data in the present paper.

The initial segment of the first-occurrence table is shown in Table~\ref{tab:firstocc}.

\begin{table}[ht]
\centering
\footnotesize
\setlength{\tabcolsep}{4pt}
\begin{tabular}{c c c >{\raggedright\arraybackslash}p{8.1cm}}
\toprule
$r$ & $n_r^{\first}$ & $|\Fset_r|$ & representatives of $\Fset_r$ up to conjugation \\
\midrule
0 & 1  & 1 & $(1)$ \\
1 & 2  & 2 & $(2)$ \\
2 & 4  & 3 & $(3,1),\ (2,2)$ \\
3 & 7  & 4 & $(4,2,1),\ (3,3,1)$ \\
4 & 11 & 5 & $(5,3,2,1),\ (4,4,2,1),\ (4,3,3,1)$ \\
5 & 16 & 6 & $(6,4,3,2,1),\ (5,5,3,2,1),\ (5,4,4,2,1)$ \\
6 & 22 & 7 & $(7,5,4,3,2,1),\allowbreak\ (6,6,4,3,2,1),\allowbreak\ (6,5,5,3,2,1),\allowbreak\ (6,5,4,4,2,1)$ \\
7 & 29 & 8 & $(8,6,5,4,3,2,1),\allowbreak\ (7,7,5,4,3,2,1),\allowbreak\ (7,6,6,4,3,2,1),\allowbreak\ (7,6,5,5,3,2,1)$ \\
\bottomrule
\end{tabular}
\caption{Initial first-occurrence data for simplex layers. Rows $r=0,1,2,3$ are exact; rows $r=4,5,6,7$ record finite-range computational observations consistent with exhaustive enumeration up to $n=29$.}
\label{tab:firstocc}
\end{table}

This table serves three purposes:
\begin{enumerate}[label=(\arabic*)]
\item it records the earliest appearance of each listed layer;
\item it identifies the partitions that realize that layer for the first time;
\item it separates theorem-level initial results from broader finite-range computations.
\end{enumerate}

The computed data suggest that the sets $\Fset_r$ follow a rigid staircase-based pattern. At present we state this only as a computational pattern, not as a theorem.

\subsection{Structural questions}

The first-occurrence table leads naturally to several further questions.

The first is whether the sets $\Fset_r$ always coincide with the staircase families $\Stair_r$. The computed data suggest an affirmative answer, but such observations should remain computational unless supported by a uniform proof.

The second is whether first occurrences are always explained by one of the two pure local mechanisms above, or whether genuinely mixed local configurations already occur at minimal size.

The third is the asymptotic growth of the sequence
\[
r\longmapsto n_r^{\first}.
\]
At present this remains open.

\section{Adjacent-layer phase boundaries}\label{sec:boundaries}

\subsection{Edge, vertex, and one-sided boundaries}

We now study the formal interfaces between adjacent simplex layers. Since the layers
\[
L_r(n)=\{\lambda\vdash n:\dimloc(\lambda)=r\}
\]
are level sets of a vertex invariant, the natural notion of phase boundary is graph-theoretic.

For $n\ge 1$ and $r\ge 0$, define the adjacent-layer edge boundary
\[
\Ebdry_{r,r+1}(n)
:=
\bigl\{
\{\lambda,\mu\}\in E(G_n):
\lambda\in L_r(n),\ \mu\in L_{r+1}(n)
\bigr\}.
\]

Its endpoint set is the adjacent-layer vertex boundary
\[
\Vbdry_{r,r+1}(n)
:=
\bigl\{
\nu\in L_r(n)\cup L_{r+1}(n):
\nu \text{ is incident to an edge of }\Ebdry_{r,r+1}(n)
\bigr\}.
\]

It is also convenient to separate the two sides:
\[
\bminus_{r,r+1}(n):=\Vbdry_{r,r+1}(n)\cap L_r(n),
\qquad
\bplus_{r,r+1}(n):=\Vbdry_{r,r+1}(n)\cap L_{r+1}(n).
\]

These are the lower-side and upper-side boundaries, respectively.

\subsection{Basic properties of adjacent-layer boundaries}

The first formal properties are immediate from the definitions.

\begin{proposition}\label{prop:boundary_basic}
For every $n\ge 1$ and $r\ge 0$:
\begin{enumerate}[label=(\arabic*)]
\item $\Ebdry_{r,r+1}(n)$ is precisely the set of edges joining $L_r(n)$ to $L_{r+1}(n)$;

\item $\bminus_{r,r+1}(n)$ is exactly the set of vertices in $L_r(n)$ having a neighbor in $L_{r+1}(n)$;

\item $\bplus_{r,r+1}(n)$ is exactly the set of vertices in $L_{r+1}(n)$ having a neighbor in $L_r(n)$;

\item one has the disjoint union
   \[
   \Vbdry_{r,r+1}(n)=\bminus_{r,r+1}(n)\sqcup \bplus_{r,r+1}(n).
   \]
\end{enumerate}
\end{proposition}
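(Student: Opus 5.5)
The four items all follow by unwinding the definitions of $\Ebdry_{r,r+1}(n)$, $\Vbdry_{r,r+1}(n)$, $\bminus_{r,r+1}(n)$ and $\bplus_{r,r+1}(n)$. The only substantive input is that the layers are pairwise disjoint, which is Proposition~\ref{prop:partition}(1). I will take the items in order.

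Item (1) is a restatement of the definition of $\Ebdry_{r,r+1}(n)$. One small remark is needed: an unordered edge $\{\lambda,\mu\}$ cannot be counted "both ways". Since $L_r(n)\cap L_{r+1}(n)=\varnothing$, each boundary edge has exactly one endpoint in $L_r(n)$ and exactly one in $L_{r+1}(n)$. So the set is unambiguously the set of edges with one end in each layer.

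For item (2), take $\nu\in\bminus_{r,r+1}(n)$. Then $\nu\in L_r(n)$ and $\nu$ is incident to some edge $\{\lambda,\mu\}\in\Ebdry_{r,r+1}(n)$ with $\lambda\in L_r(n)$ and $\mu\in L_{r+1}(n)$. Because $\nu\in L_r(n)$ and $L_r(n)\cap L_{r+1}(n)=\varnothing$, we have $\nu\neq\mu$. Hence $\nu=\lambda$, and $\mu$ is a neighbour of $\nu$ lying in $L_{r+1}(n)$.

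Conversely, suppose $\nu\in L_r(n)$ has a neighbour $\mu\in L_{r+1}(n)$. Then $\{\nu,\mu\}\in\Ebdry_{r,r+1}(n)$, so $\nu\in\Vbdry_{r,r+1}(n)\cap L_r(n)$.

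Item (3) is the same argument with the roles of $r$ and $r+1$ exchanged.

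For item (4), the definition gives $\Vbdry_{r,r+1}(n)\subseteq L_r(n)\cup L_{r+1}(n)$. Therefore
\[
\Vbdry_{r,r+1}(n)=\bigl(\Vbdry\cap L_r(n)\bigr)\cup\bigl(\Vbdry\cap L_{r+1}(n)\bigr)=\bminus_{r,r+1}(n)\cup\bplus_{r,r+1}(n).
\]
This union is disjoint because $L_r(n)$ and $L_{r+1}(n)$ are disjoint.

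There is no real obstacle here. The one point to state explicitly is the use of disjointness of distinct layers: it is what identifies which endpoint of a boundary edge lies on which side, both in (2)–(3) and for the disjoint union in (4).
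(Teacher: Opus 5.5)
Your proof is correct and follows the same route as the paper: item (1) is the definition, items (2) and (3) come from identifying which endpoint of a boundary edge lies on which side, and item (4) rests on the disjointness of $L_r(n)$ and $L_{r+1}(n)$. You state explicitly the use of layer disjointness in (2)--(3), which the paper leaves implicit in the phrase ``taking endpoints on the corresponding side.''
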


\begin{proof}
The first statement is the definition of $\Ebdry_{r,r+1}(n)$. The second and third follow by taking endpoints on the corresponding side. The fourth is immediate because the layers $L_r(n)$ and $L_{r+1}(n)$ are disjoint.\qedhere
\end{proof}

\begin{lemma}\label{lem:boundary_empty}
If $L_r(n)=\varnothing$ or $L_{r+1}(n)=\varnothing$, then
\[
\Ebdry_{r,r+1}(n)=\varnothing.
\]
\end{lemma}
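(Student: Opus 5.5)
The plan is to argue directly from the definition of $\Ebdry_{r,r+1}(n)$, with no appeal to the local dimension formula (Theorem~\ref{thm:imported_local_dimension}) or to any structural property of $G_n$. By definition, an element of $\Ebdry_{r,r+1}(n)$ is an edge $\{\lambda,\mu\}\in E(G_n)$ with one endpoint $\lambda\in L_r(n)$ and the other endpoint $\mu\in L_{r+1}(n)$. So a single boundary edge already provides a witness in each of the two layers.

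I will argue by contraposition. Suppose $\Ebdry_{r,r+1}(n)\neq\varnothing$, and pick an edge $\{\lambda,\mu\}$ in it. Then $\lambda\in L_r(n)$, so $L_r(n)\neq\varnothing$, and $\mu\in L_{r+1}(n)$, so $L_{r+1}(n)\neq\varnothing$. This contradicts the hypothesis that one of the two layers is empty. Equivalently, if $L_r(n)=\varnothing$ or $L_{r+1}(n)=\varnothing$, the defining condition cannot be satisfied by any edge, and the set is empty. In the language of Proposition~\ref{prop:boundary_basic}(1), the set of edges joining $L_r(n)$ to $L_{r+1}(n)$ is empty as soon as one side is empty.

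I do not expect a genuine obstacle. The only point that needs a moment's care is that the definition is written with an ordered choice of endpoints, $\lambda$ in the lower layer and $\mu$ in the upper layer, even though the edge $\{\lambda,\mu\}$ itself is unordered. Any edge in the set admits some labelling of its endpoints satisfying both memberships, and that labelling suffices to exhibit an element of each layer. As a final remark, I will note the consequence that $\Ebdry_{r,r+1}(n)$ can be nonempty only when both $r$ and $r+1$ lie in $\SpecDelta(n)$.
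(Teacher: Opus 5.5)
Your proof is correct and is the same argument as the paper's, which simply declares the lemma immediate from the definition of $\Ebdry_{r,r+1}(n)$. Your contrapositive, in which any boundary edge supplies a witness in both $L_r(n)$ and $L_{r+1}(n)$, is exactly that one-line argument written out.
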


\begin{proof}
Immediate from the definition.\qedhere
\end{proof}

Thus the boundary between two adjacent layers is defined purely in graph-theoretic terms, independently of any geometric visualization.

\subsection{Conjugation invariance}

Since the layers themselves are conjugation-invariant, the same holds for the boundaries between them.

\begin{proposition}\label{prop:boundary_conj}
For every $n\ge 1$ and $r\ge 0$, the sets
\[
\Ebdry_{r,r+1}(n),\qquad
\Vbdry_{r,r+1}(n),\qquad
\bminus_{r,r+1}(n),\qquad
\bplus_{r,r+1}(n)
\]
are invariant under partition conjugation.
\end{proposition}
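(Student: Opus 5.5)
The plan is to derive everything from two facts: conjugation $\lambda\mapsto\lambda'$ is a graph automorphism of $G_n$, as already used in the proof of Proposition~\ref{prop:conj_dimloc}, and each layer is conjugation-invariant by Corollary~\ref{cor:conj_layers}. We interpret the conjugation action on an edge set in the natural way, $\{\lambda,\mu\}\mapsto\{\lambda',\mu'\}$. Invariance of a set then means that it is mapped into itself. Since conjugation is an involution, mapping into itself is the same as mapping onto itself.

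\textbf{Edge boundary.} Let $\{\lambda,\mu\}\in\Ebdry_{r,r+1}(n)$ with $\lambda\in L_r(n)$ and $\mu\in L_{r+1}(n)$. Because conjugation is an automorphism, $\{\lambda',\mu'\}\in E(G_n)$. By Corollary~\ref{cor:conj_layers}, $\lambda'\in L_r(n)$ and $\mu'\in L_{r+1}(n)$. Hence $\{\lambda',\mu'\}\in\Ebdry_{r,r+1}(n)$, and conjugation maps the edge boundary into itself. Applying the same argument once more and using $\lambda''=\lambda$ shows that the map is a bijection of $\Ebdry_{r,r+1}(n)$ onto itself.

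\textbf{Vertex boundary.} Take $\nu\in\Vbdry_{r,r+1}(n)$. Then $\nu$ is an endpoint of some edge $e\in\Ebdry_{r,r+1}(n)$, so $\nu'$ is an endpoint of the conjugate edge $e'$. By the previous step $e'\in\Ebdry_{r,r+1}(n)$, and therefore $\nu'\in\Vbdry_{r,r+1}(n)$.

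\textbf{One-sided boundaries.} By definition, $\bminus_{r,r+1}(n)$ and $\bplus_{r,r+1}(n)$ are the intersections of $\Vbdry_{r,r+1}(n)$ with $L_r(n)$ and with $L_{r+1}(n)$, respectively. Each of these three sets is conjugation-invariant, so the intersections are as well. Alternatively, one can use the characterization in Proposition~\ref{prop:boundary_basic}(2),(3): a vertex of $L_r(n)$ has a neighbor in $L_{r+1}(n)$ exactly when its conjugate does, because conjugation preserves both adjacency and the layers.

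There is no substantive obstacle. The only point needing care is the automorphism property of conjugation, which the paper already takes as given. One can check it directly: a unit transfer between two rows of $\lambda$ corresponds to a unit transfer between two columns, that is, between two rows of $\lambda'$.
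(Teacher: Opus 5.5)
Your proof is correct and follows essentially the same route as the paper. You use that conjugation is an automorphism of $G_n$ and preserves each layer (Corollary~\ref{cor:conj_layers}) to get invariance of $\Ebdry_{r,r+1}(n)$, then pass to endpoints and intersect with $L_r(n)$ and $L_{r+1}(n)$; your added remarks on bijectivity via the involution and on the direct check of the automorphism property are harmless refinements.
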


\begin{proof}
Conjugation is an automorphism of the partition graph $G_n$, so it preserves adjacency. By Corollary~\ref{cor:conj_layers} it also preserves each exact layer $L_r(n)$. Hence a cross-layer edge between $L_r(n)$ and $L_{r+1}(n)$ is sent to another such edge, proving invariance of $\Ebdry_{r,r+1}(n)$. The remaining statements follow by taking endpoints and restricting to the corresponding layer.\qedhere
\end{proof}

In particular, every non-self-conjugate boundary edge or boundary vertex occurs together with its conjugate counterpart.

\subsection{Boundary non-emptiness}

Boundary non-emptiness admits the following equivalent formulations.

\begin{proposition}\label{prop:boundary_nonempty}
The following are equivalent:
\begin{enumerate}[label=(\arabic*)]
\item $\Ebdry_{r,r+1}(n)\neq\varnothing$;
\item there exists $\lambda\in L_r(n)$ adjacent to some $\mu\in L_{r+1}(n)$;
\item both $\bminus_{r,r+1}(n)$ and $\bplus_{r,r+1}(n)$ are nonempty.
\end{enumerate}
\end{proposition}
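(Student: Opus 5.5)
The plan is to prove the cycle of implications $(1)\Rightarrow(2)\Rightarrow(3)\Rightarrow(1)$. Each step just unpacks the definitions of $\Ebdry_{r,r+1}(n)$, $\Vbdry_{r,r+1}(n)$, $\bminus_{r,r+1}(n)$ and $\bplus_{r,r+1}(n)$, together with the characterizations in Proposition~\ref{prop:boundary_basic}.

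For $(1)\Rightarrow(2)$, take an edge $\{\lambda,\mu\}\in\Ebdry_{r,r+1}(n)$. By definition one endpoint lies in $L_r(n)$ and the other in $L_{r+1}(n)$. After relabeling we may take $\lambda\in L_r(n)$ and $\mu\in L_{r+1}(n)$, and these two vertices are adjacent in $G_n$.

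For $(2)\Rightarrow(3)$, take such an adjacent pair $\lambda\in L_r(n)$, $\mu\in L_{r+1}(n)$. Then $\lambda$ is a vertex of $L_r(n)$ with a neighbor in $L_{r+1}(n)$, so $\lambda\in\bminus_{r,r+1}(n)$ by Proposition~\ref{prop:boundary_basic}(2). Symmetrically, $\mu\in\bplus_{r,r+1}(n)$ by Proposition~\ref{prop:boundary_basic}(3). Hence both one-sided boundaries are nonempty.

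For $(3)\Rightarrow(1)$, it suffices to know that $\bminus_{r,r+1}(n)$ is nonempty. Pick $\lambda$ in it. By Proposition~\ref{prop:boundary_basic}(2), $\lambda\in L_r(n)$ has some neighbor $\mu\in L_{r+1}(n)$. The edge $\{\lambda,\mu\}$ then lies in $\Ebdry_{r,r+1}(n)$, so this set is nonempty.

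No step is a real obstacle. The only point needing care is that the hypothesis in $(3)$ is stronger than the proof of $(3)\Rightarrow(1)$ requires: nonemptiness of either side alone already forces an edge. This shows that in fact each of $\bminus_{r,r+1}(n)\neq\varnothing$ and $\bplus_{r,r+1}(n)\neq\varnothing$ is separately equivalent to $(1)$, and this could be noted as a remark.
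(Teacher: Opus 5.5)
Your proof is correct and is essentially the paper's argument. Both simply unpack the definitions of $\Ebdry_{r,r+1}(n)$, $\bminus_{r,r+1}(n)$ and $\bplus_{r,r+1}(n)$; the paper proves $(1)\Leftrightarrow(2)$ and $(1)\Leftrightarrow(3)$ directly rather than running a cycle, and your remark that nonemptiness of either one-sided boundary alone already forces $(1)$ is also right, since the paper's converse step likewise only needs one side.
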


\begin{proof}
The equivalence of (1) and (2) is immediate from the definition. If (1) holds, the endpoints of any edge in $\Ebdry_{r,r+1}(n)$ belong to the two one-sided boundaries, so (3) holds. Conversely, if both one-sided boundaries are nonempty, then by definition each contains a vertex incident to a cross-layer edge, and therefore $\Ebdry_{r,r+1}(n)\neq\varnothing$.\qedhere
\end{proof}

Thus boundary non-emptiness is exactly the statement that the two adjacent exact layers are actually adjacent in the graph.

\begin{remark}
The purpose of the present section is primarily definitional: it fixes the exact interface language associated with adjacent layers. Structural questions about the size, shape, or distribution of these boundaries are left for later work.
\end{remark}

\subsection{Cross-layer edges beyond adjacent layers}

The formal phase boundary defined above concerns only adjacent layer values. In principle, the graph $G_n$ may also contain edges joining $L_r(n)$ to $L_s(n)$ with $|r-s|\ge 2$. To distinguish this broader phenomenon from the present notion of phase boundary, define for $r\neq s$
\[
E_{r,s}(n)
:=
\bigl\{
\{\lambda,\mu\}\in E(G_n):
\lambda\in L_r(n),\ \mu\in L_s(n)
\bigr\}.
\]

Then
\[
\Ebdry_{r,r+1}(n)=E_{r,r+1}(n)
\]
is the special case corresponding to neighboring exact levels.

The adjacent-layer boundary is therefore one part of the full edge-jump structure of $\dimloc$. The latter will be treated as a separate structural question.

\section{Restricted localization and comparison with shell geometry}\label{sec:localization}

\subsection{Restricted layer sets and restricted boundary sets}

We now relate the layer language to broader geometric subsets of the partition graph. Let $X_n\subseteq \Par(n)$ be a distinguished subset, for example the boundary framework, the self-conjugate axis, the spine, the central region, or the rear-central region.

For $r\ge 0$, define the restricted layer set
\[
L_r(n;X_n):=L_r(n)\cap X_n.
\]

Likewise, define the restricted adjacent-layer boundary
\[
\Ebdry_{r,r+1}(n;X_n)
:=
\bigl\{
\{\lambda,\mu\}\in \Ebdry_{r,r+1}(n):
\lambda,\mu\in X_n
\bigr\}.
\]

These restricted sets provide the language for localization questions. For example, the statement
\[
L_r(n)\cap X_n=\varnothing
\]
means that the layer $L_r(n)$ is absent from $X_n$, while
\[
\Ebdry_{r,r+1}(n;X_n)=\varnothing
\]
means that the adjacent-layer interface has no internal edges inside $X_n$.

\subsection{Conjugation-compatible localization}

The restriction formalism interacts well with conjugation-symmetric geometric subsets.

\begin{proposition}\label{prop:restriction_conj}
Let $X_n\subseteq \Par(n)$ be invariant under partition conjugation. Then, for every $r$,
\[
L_r(n;X_n)
\]
is conjugation-invariant. Likewise, for every $r$,
\[
\Ebdry_{r,r+1}(n;X_n)
\]
is conjugation-invariant.
\end{proposition}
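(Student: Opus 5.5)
The argument is a direct intersection argument, combining Corollary~\ref{cor:conj_layers} and Proposition~\ref{prop:boundary_conj} with the assumed invariance of $X_n$. Throughout, write $\lambda\mapsto\lambda'$ for conjugation. Since conjugation is an involution on $\Par(n)$, it suffices to prove one direction of each invariance: if a set $S$ satisfies $\lambda\in S\Rightarrow\lambda'\in S$, then applying this to $\lambda'$ gives the reverse implication, because $\lambda''=\lambda$.

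For the restricted layer set, take $\lambda\in L_r(n;X_n)=L_r(n)\cap X_n$. By Corollary~\ref{cor:conj_layers}, $\lambda'\in L_r(n)$. By the hypothesis on $X_n$, $\lambda'\in X_n$. Hence $\lambda'\in L_r(n;X_n)$. So $L_r(n;X_n)$ is the intersection of two conjugation-invariant sets, and is therefore conjugation-invariant.

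For the restricted boundary, take an edge $\{\lambda,\mu\}\in\Ebdry_{r,r+1}(n;X_n)$, so that $\{\lambda,\mu\}\in\Ebdry_{r,r+1}(n)$ and $\lambda,\mu\in X_n$. Proposition~\ref{prop:boundary_conj} shows that the conjugate edge $\{\lambda',\mu'\}$ lies in $\Ebdry_{r,r+1}(n)$. This uses that conjugation is an automorphism of $G_n$, so $\{\lambda',\mu'\}$ is again an edge, together with the layer invariance that keeps $\lambda'\in L_r(n)$ and $\mu'\in L_{r+1}(n)$. Invariance of $X_n$ gives $\lambda',\mu'\in X_n$. Hence $\{\lambda',\mu'\}\in\Ebdry_{r,r+1}(n;X_n)$.

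Here \emph{invariance} of an edge set means invariance under the induced action $\{\lambda,\mu\}\mapsto\{\lambda',\mu'\}$, exactly as in Proposition~\ref{prop:boundary_conj}. There is no real obstacle in this argument. The only point needing care is to make explicit that $X_n$ must contain both endpoints of the conjugate edge, and this is exactly where the invariance hypothesis on $X_n$ is used.
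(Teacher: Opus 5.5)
Your proof is correct and follows essentially the same route as the paper. In both, $L_r(n;X_n)$ is an intersection of two conjugation-invariant sets (via Corollary~\ref{cor:conj_layers}), and the restricted boundary statement follows from Proposition~\ref{prop:boundary_conj} together with invariance of $X_n$ applied to both endpoints; you simply spell out the involution argument and the endpoint check that the paper compresses into ``follows similarly.''
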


\begin{proof}
By Corollary~\ref{cor:conj_layers}, each layer $L_r(n)$ is conjugation-invariant. Hence its intersection with a conjugation-invariant set $X_n$ is again conjugation-invariant. The boundary statement follows similarly from Proposition~\ref{prop:boundary_conj}.\qedhere
\end{proof}

In particular, this applies to any distinguished subset $X_n$ defined in a conjugation-symmetric way.

\subsection{Exact layer language versus shell language}

The simplex layers and adjacent-layer boundaries are exact graph-theoretic objects defined in terms of $\dimloc$. By contrast, the shell language refers to broader geometric regions and thickening regimes. The two descriptions are related but not identical.

In particular, the layer language concerns exact level sets such as
\[
L_r(n),\qquad \toplayer(n),\qquad \Ebdry_{r,r+1}(n),
\]
whereas the shell language refers to broader regions such as the triangular skin, the tetrahedral core, and higher-dimensional rear-central thickening.

The comparison should therefore be restriction-based: one asks how layers and adjacent-layer boundaries are distributed inside those broader geometric regions. One should not identify shells with layers unless this is proved in a precise formal sense.

\subsection{What is theorem-level and what is computational}

The restriction formalism gives the exact language for localization statements, but it does not by itself prove such statements. Stronger claims, for example that higher layers avoid most of the framework or concentrate near the rear-central region, should be treated computationally unless supported by separate arguments.

Accordingly, the role of this section is modest: it fixes the framework in which localization data can be stated. Structural localization results and detailed restricted counts are not developed here.

\section{Natural computational datasets and sequences}\label{sec:atlas}

\subsection{The layer profile array}

We briefly indicate the natural computational objects attached to the stratification. The central object is the layer profile array
\[
a_{n,r}:=|L_r(n)|.
\]

For fixed $n$, the row
\[
(a_{n,r})_{r}
\]
records the exact layer profile of the graph $G_n$. It determines the realized spectrum
\[
\SpecDelta(n)=\{r:a_{n,r}>0\},
\]
the bottom and top local simplex dimensions
\[
\deltaloc(n)=\min\{r:a_{n,r}>0\},
\qquad
\Deltaloc(n)=\max\{r:a_{n,r}>0\},
\]
and the size of the top layer
\[
|\toplayer(n)|=a_{n,\Deltaloc(n)}.
\]

Thus the array $(a_{n,r})$ is the basic computational object attached to the stratification.

\subsection{Top-layer and first-occurrence data}

A second natural dataset concerns the upper end of the stratification. For each $n$, one records
\[
\Deltaloc(n)
\qquad\text{and}\qquad
\tau_{\mathrm{top}}(n):=|\toplayer(n)|.
\]

A third dataset concerns first occurrence. For each $r$, one records
\[
n_r^{\first},
\qquad
|\Fset_r|,
\qquad
\Fset_r\text{ (up to conjugation)}.
\]

Together, these tables describe both the vertical structure of a fixed graph $G_n$ and the horizontal emergence of new layers across the sequence $(G_n)$.

\subsection{Boundary tables}

The phase-boundary theory of Section~\ref{sec:boundaries} leads to a parallel family of interface counts. For each $n$ and $r$, define
\[
b^E_{r,r+1}(n):=\bigl|\Ebdry_{r,r+1}(n)\bigr|,
\]
\[
b^-_{r,r+1}(n):=\bigl|\bminus_{r,r+1}(n)\bigr|,
\qquad
b^+_{r,r+1}(n):=\bigl|\bplus_{r,r+1}(n)\bigr|,
\]
and
\[
b^V_{r,r+1}(n):=\bigl|\Vbdry_{r,r+1}(n)\bigr|.
\]

These quantities measure, respectively, the total amount of adjacency between neighboring layers, the number of participating vertices on the lower and upper sides, and the total width of the interface in vertex terms.

They replace purely visual statements about where one shell appears to pass into another.

\subsection{Restricted regional counts}

To compare layer theory with shell geometry, the same computational framework may also include restricted counts for selected geometric regions $X_n\subseteq \Par(n)$. For example, one may record
\[
a_{n,r}(X_n):=|L_r(n)\cap X_n|
\]
and, for adjacent-layer interfaces,
\[
b^E_{r,r+1}(n;X_n):=
\bigl|\Ebdry_{r,r+1}(n;X_n)\bigr|.
\]

These restricted counts do not by themselves prove localization theorems, but they state the localization data in exact layer language and make it possible to compare different geometric regions on the same footing.

\subsection{Natural sequences and displayed data}

The most natural integer sequences arising from this computational framework are:
\begin{enumerate}[label=(\arabic*)]
\item the top local simplex dimension
   \[
   \Deltaloc(n);
   \]

\item the top-layer size
   \[
   \tau_{\mathrm{top}}(n)=|\toplayer(n)|;
   \]

\item the first-occurrence index
   \[
   n_r^{\first};
   \]

\item the fixed-layer size sequences
   \[
   a_r(n):=|L_r(n)|;
   \]

\item the adjacent-layer boundary sequences
   \[
   b_r(n):=|\Ebdry_{r,r+1}(n)|.
   \]
\end{enumerate}

These sequences should be distinguished clearly from theorem-level results. Their role in the paper is descriptive: they display the concrete content of the stratification and suggest further structural questions.

Possible visual companions include layer-profile plots, first-occurrence plots, and heat maps for boundary counts.

\section{Conclusion and open problems}\label{sec:conclusion}

\subsection{Exact results}

The exact part of the paper establishes the layer stratification induced by $\dimloc$, rewrites layer membership in terms of the local star/top capacities, and proves conjugation invariance. It also introduces the adjacent-layer edge boundary as the interface between neighboring levels, organizes the first-occurrence problem, and isolates the initial layer values for which theorem-level results are obtained.

Thus the local simplex dimension is promoted from a vertex-wise local invariant to a global stratification of the partition graph.

\subsection{Computational outcomes}

The computational component of the paper is limited to the explicit first-occurrence table of Section~\ref{sec:firstocc}, together with the identification of the natural datasets attached to the stratification, namely the quantities
\[
|L_r(n)|,\qquad
\deltaloc(n),\qquad
\Deltaloc(n),\qquad
|\toplayer(n)|,\qquad
n_r^{\first},\qquad
|\Ebdry_{r,r+1}(n)|,
\]
and their restricted counterparts inside selected geometric regions. Apart from the first-occurrence table, these datasets are indicated here mainly as a computational framework for later work. Whenever such data are used, they remain computational unless supported by separate proofs.

In particular, questions about concentration of higher layers, migration of adjacent-layer interfaces, or regularity of top-layer growth remain observational in the present paper.

\subsection{Open problems}

The most natural next questions fall into several groups.

\begin{problem}
Determine the growth of the first-occurrence sequence
\[
r\longmapsto n_r^{\first}.
\]
Is there an exact formula, a structural characterization, or a robust asymptotic law?
\end{problem}

\begin{problem}
Identify the partitions in the first-occurrence sets
\[
\Fset_r.
\]
Do they belong to a recognizable extremal family?
\end{problem}

\begin{problem}
Does the realized layer spectrum
\[
\SpecDelta(n)
\]
always form an interval? Equivalently, can there be gaps between realized layer values for a fixed $n$?
\end{problem}

\begin{problem}
How much can the local simplex dimension change along a single edge of $G_n$? More precisely, what bounds can one place on
\[
|\dimloc(\lambda)-\dimloc(\mu)|
\qquad (\lambda\sim\mu)?
\]
\end{problem}

\begin{problem}
Which computational localization patterns admit exact proofs? In particular, do higher layers avoid substantial parts of the framework, and are there exact rear-central localization theorems for sufficiently large realized layer values?
\end{problem}

\begin{problem}
What can be said about the combinatorial structure of the adjacent-layer interfaces
\[
\Ebdry_{r,r+1}(n)?
\]
Are there exact formulas, sharp bounds, or recurring structural types for these boundaries?
\end{problem}

\subsection{Final remark}

The partition graph continues to display a characteristic duality already visible elsewhere in the project: a simple local move system gives rise to unexpectedly rich global structure. Here that structure appears as a stratification by local simplex dimension, together with canonical interfaces between neighboring levels. The simplex layers and their phase boundaries are, in this sense, a formal skeleton of that geometry.

\section*{Acknowledgements}
The author acknowledges the use of ChatGPT (OpenAI) for discussion, structural planning, and editorial assistance during the preparation of this manuscript. All mathematical statements, proofs, computations, and final wording were checked and approved by the author, who takes full responsibility for the contents of the paper.


\begin{thebibliography}{99}

\bibitem{Andrews}
George E. Andrews,
\emph{The Theory of Partitions},
Encyclopedia of Mathematics and its Applications, Vol.~2, Addison-Wesley, Reading, MA, 1976.

\bibitem{Hatcher}
Allen Hatcher,
\emph{Algebraic Topology},
Cambridge University Press, Cambridge, 2002.

\bibitem{Stanley}
Richard P. Stanley,
\emph{Enumerative Combinatorics. Vol.~1}, second edition,
Cambridge University Press, Cambridge, 2011.

\bibitem{ProjectLocal}
Fedor B. Lyudogovskiy,
\emph{Local Morphology of the Partition Graph},
arXiv:\href{https://arxiv.org/abs/2603.18696}{2603.18696}, 2026.

\bibitem{ProjectShells}
Fedor B. Lyudogovskiy,
\emph{Simplicial shells and thickness in the partition graph},
arXiv:\href{https://arxiv.org/abs/2603.28171}{2603.28171}, 2026.

\bibitem{ProjectAxial}
Fedor B. Lyudogovskiy,
\emph{Axial Morphology of the Partition Graph: Self-Conjugate Axis, Spine, and Concentration},
arXiv:\href{https://arxiv.org/abs/2603.22546}{2603.22546}, 2026.

\bibitem{ProjectBoundary}
Fedor B. Lyudogovskiy,
\emph{Boundary Framework, Rear Morphology, and Rectangular Ears in the Partition Graph},
arXiv:\href{https://arxiv.org/abs/2603.24824}{2603.24824}, 2026.

\bibitem{ProjectDirectional}
Fedor B. Lyudogovskiy,
\emph{Directional Geometry and Anisotropy in the Partition Graph},
arXiv:\href{https://arxiv.org/abs/2603.25488}{2603.25488}, 2026.

\end{thebibliography}
\end{document}